\newtheorem{theorem}{Theorem}
\newtheorem{lemma}{Lemma}
\newtheorem{example}{Example}
\begin{document}

\title{Usual stochastic ordering results for series and parallel systems with components having Exponentiated Chen distribution}

\author{Madhurima Datta and Nitin Gupta\\
Indian Institute of Technology Kharagpur}

\address{Department of Mathematics, Indian Institute of Technology Kharagpur}

 \email[M. Datta]{ madhurima92.datta@iitkgp.ac.in}
 \email[N. Gupta]{ nitin.gupta@maths.iitkgp.ac.in}
 
 \maketitle
\begin{abstract}
	In this paper we have discussed the usual stochastic ordering relations between two system. Each system consists of n mutually independent components. The components follow Exponentiated (Extended) Chen distribution with three parameters $\alpha, \beta, \lambda$. Two popular system are taken into consideration, one is the series system and another is the parallel system. The results in this paper are obtained by varying one parameter and the other parameters are kept constant. The hazard rate ordering or reversed hazard rate ordering relations that are not possible for series or parallel systems have been demonstrated with the help of counterexamples.


\end{abstract}
\textit{Keywords} Exponentiated Chen distribution; Parallel system; Series system; Usual stochastic order \\
MSC 62N05 \\

Exponentiated Chen distribution is an extension of the family of distributions obtained by \cite{1} using Lehman alternatives. It is used for modeling survival data. The family of distributions obtained by using Lehman alternatives are known as exponentiated type family. The resultant cumulative distribution function is obtained as follows
$$F(x,\alpha)=(F_{0}(x))^{\alpha}, ~ x>0, ~\alpha >0,$$
where $F_{0}(x)$ is the baseline distribution and $F(x,\alpha)$ is the generalization of $F_{0}(x)$ and $\alpha$ is a parameter. This model is also known as the Proportional reversed hazard rate (PRHR) model where the parameter $\alpha$ is a proportionality constant. The Chen distribution function introduced by \cite{1} is 
\begin{equation}\label{CD}
F(x; \beta, \lambda)=(1-e^{\lambda(1-e^{x^{\beta}})}) , ~x>0, \beta >0 , \lambda >0.
\end{equation}
Applying the transformation $T=(e^{X^{\beta}}-1)^{1/\beta}$ in equation \eqref{CD}, we observe that $T$ follows Weibull distribution with scale parameter $\lambda$ and shape parameter $\beta$. Also \cite{3} extended Chen distribution by adding a parameter and the survival function of the resultant Extended Weibull distribution is
\begin{equation}
\overline{F}(x;\alpha, \beta, \lambda)= (e^{\lambda(1-e^{{(\frac{x}{\alpha})}^{\beta}})})^{\alpha}, ~x>0, \alpha>0, \beta >0 , \lambda >0.
\end{equation}
\cite{4} introduced another shape parameter to the Extended Weibull distribution and obtained a four parameter modified Weibull extension distribution using the Marshall-Olkin technique. \par
Later \cite{2} introduced the generalization of Chen distribution given by \cite{1} by introducing a new parameter $\alpha$. The new distribution function with parameters $\alpha, \beta, \lambda$ is 
\begin{equation}
F(x;\alpha, \beta, \lambda)=(1-e^{\lambda(1-e^{x^{\beta}})})^{\alpha} , ~x>0, \alpha>0, \beta >0 , \lambda >0.
\end{equation}
\cite{5} discussed various important properties of Exponentiated Chen distribution such as the density function can be either decreasing or unimodal depending on the parameters $\alpha$ and $ \beta$. Also the hazard rate function can be bathtub shaped or increasing depending on $\alpha$ and $\beta$. The reversed hazard rate function of Exponentiated Chen distribution is
\begin{equation}
\tilde{r}(x;\alpha,\beta,\lambda)=\dfrac{\alpha\beta\lambda x^{\beta-1}e^{x^{\beta}}e^{\lambda(1-e^{x^{\beta}})}}{1-e^{\lambda(1-e^{x^{\beta}})}}, ~x>0, \alpha>0, \beta >0 , \lambda >0.
\end{equation}
In this paper we will explore the usual stochastic ordering relations for the minimum and maximum ordered statistics (series and parallel systems respectively) for two different samples whose components follows the Exponentiated Chen distribution (ECD). These order statistics are extremely important in reliability theory. Consider a set of random variables $X_1,\ldots,X_n$, they can be arranged as $X_{1:n},X_{2:n},\ldots, X_{n:n}$ such that $X_{1:n} \leq X_{2:n} \leq \ldots \leq X_{n:n}$, here $X_{k:n}$ is the k-th minimum of the set and is known as the k-th order statistic. $X_{k:n}$ corresponds to the lifetime of a (n-k+1)-out-of-n system. Details on order statistics are available in the book \cite{10}. Studies of stochastic ordering of ordered statistics are extremely popular nowadays and a wide variety of stochastic ordering results are available in the literature(\cite{11}). \cite{8} pioneered the field of stochastic ordering and developed  usual stochastic ordering results for proportional hazard rate models which implied similar results for exponential distribution, gamma distribution, Weibull distribution etc. \\
Consider the random variables $X$ and $Y$ to be absolutely continuous with distribution functions $F(x)$ and $G(x)$; survival functions as $\overline{F}(x)$ and $\overline{G}(x)$; probability density functions as $f(x)$ and $g(x)$; hazard rate functions as $r(x) = \dfrac{f(x)}{\overline{F}(x)}$ and $s(x) = \dfrac{g(x)}{\overline{G}(x)}$; reversed hazard rate functions as $\tilde{r}(x) = \dfrac{f(x)}{F(x)}$ and $\tilde{s}(x) = \dfrac{g(x)}{G(x)}$. \cite{6} contains detailed explanation of the above terms. Then the stochastic ordering between the random variables $X$ and $Y$ are described as follows (see \cite{7} for further details). 

\begin{enumerate}[(a)]

\item  $X$ is smaller than $Y$ in the usual stochastic order ($X \leq_{st} Y$) if and only if $\overline{F}(x) \leq \overline{G}(x)$ $\forall$ $x \in \mathbb{R}$.
	
\item $X$ is smaller than $Y$ in hazard rate order $(X \leq_{hr} Y)$ if
	$r(x) \geq s(x), ~x \in \mathbb{R}$.
	Equivalently, if $ \dfrac{\overline{G}(x)}{\overline{F}(x)}$ is increasing in $x$ over the union of the supports of $X$ and $Y$.
	
\item $X$ is smaller than $Y$ in reversed hazard rate order $(X \leq_{rh} Y)$ if $\tilde{r}(x) \leq \tilde{s}(x), ~x \in \mathbb{R}$.
	Equivalently, if $ \dfrac{G(x)}{F(x)}$ is increasing in $x$ over the union of the supports of $X$ and $Y$.
	
\item $X$ is smaller than $Y$ in likelihood ratio order $(X \leq_{lr} Y)$ if $\dfrac{g(x)}{f(x)}$ is increasing in $x$ over the union of the supports of $X$ and $Y$. 
\end{enumerate}
The likelihood ratio ordering implies both hazard rate and reversed hazard rate ordering which again implies the usual stochastic ordering. \\
Further two real valued vectors, $\underline{a} = (a_1, \ldots, a_n)$ and $\underline{b} = (b_1, \ldots, b_n)$, then $\underline{a}$ is majorized by $\underline{b}$   ( $\underline{a} \prec^{m} \underline{b}$ ) if
		\begin{equation}
		\sum_{i=1}^{n}a_{i:n} = \sum_{i=1}^{n}b_{i:n} ~\text{and}~ \sum_{i=1}^{k}a_{i:n} \geq \sum_{i=1}^{k}b_{i:n} ~\forall~k = 1,\ldots, n-1;
		\end{equation}
 where $a_{1:n} \leq \ldots \leq a_{n:n} ~( b_{1:n} \leq \ldots \leq b_{n:n}) $ is the increasing arrangement of $a_1, \ldots, a_n\\ (b_1, \ldots, b_n)$. In general for two matrices $A=\{a_{ij}\}_{m\times n}$ and $B=\{b_{ij}\}_{m\times n}$, A is majorized by  B ($A\prec^{m} B$) if $A=BP$, where $P=\{p_{ij}\}_{n\times n}$ is a doubly stochastic matrix (this matrix need not be unique but the existence of atleast one such matrix ensures majorization).\\
Another concept essential for understanding the paper is that a real valued function $\psi$ defined on a subset of $\mathbb{R}^n$ is \emph{Schur-convex (Schur-concave)} if
	\begin{equation}
	\underline{a} \prec^{m} \underline{b} \Rightarrow \psi(\underline{a}) ~\leq (\geq)~ \psi(\underline{b}),
	\end{equation}
	where $\underline{a} = (a_1, \ldots, a_n)$ and $\underline{b} = (b_1, \ldots, b_n)$ are two real valued vectors.
Throughout the paper, the notation $a \overset{\text{sign}}{=} b$ has been used to represent sign of $a$ is same as $b$.\\

The following lemmas have been used extensively for obtaining the results discussed in this paper.

\begin{lemma}[Theorem 3.A.4, see {\cite{9}}]
	\normalfont For an open interval $\mathbb{A} \subset \mathbb{R}$, a continuously differentiable function $\psi :\mathbb{A}^{n} \rightarrow \mathbb{R}$ is Schur-convex (Schur-concave) if and only if it is symmetric on $\mathbb{A}^{n}$ and for all $i \neq j$, let $\Delta=(a_{i} - a_{j})\left(\dfrac{\partial\psi(\underline{a})}{\partial a_{i}} - \dfrac{\partial\psi(\underline{a})}{\partial a_{j}}\right)$. Then $\Delta \geq(\leq)0.$
\end{lemma}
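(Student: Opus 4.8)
The plan is to prove this Schur--Ostrowski type criterion by establishing the two directions of the equivalence separately; it suffices to handle the Schur-convex case, since $\psi$ is Schur-concave precisely when $-\psi$ is Schur-convex, and the sign condition $\Delta\le 0$ for $\psi$ is exactly $\Delta\ge 0$ for $-\psi$. The one structural fact I would lean on is that a \emph{T-transform} on coordinates $i,j$ --- replacing a pair $(a_i,a_j)$ with $a_i\ge a_j$ by a pair moved toward their common mean with the sum held fixed --- always produces a vector majorized by the original, and, conversely (Hardy--Littlewood--P\'olya / Muirhead), that $\underline a\prec^m\underline b$ implies $\underline a$ can be reached from $\underline b$ by finitely many such transfers.

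For necessity, I would first get symmetry for free: for any permutation matrix $\Pi$ the vectors $\Pi\underline a$ and $\underline a$ have the same ordered coordinate list, hence majorize each other, so Schur-convexity forces $\psi(\Pi\underline a)=\psi(\underline a)$. For the derivative condition I would fix $i\ne j$, freeze the other coordinates, take a point with $a_i>a_j$, and perturb $(a_i,a_j)$ to $(a_i-\varepsilon,\,a_j+\varepsilon)$ for small $\varepsilon>0$; this perturbed vector is $T\underline a$ for a suitable T-transform $T$, hence is majorized by $\underline a$, so $\psi$ does not increase under it. Dividing by $\varepsilon$ and letting $\varepsilon\downarrow 0$ yields $\dfrac{\partial\psi}{\partial a_i}\ge\dfrac{\partial\psi}{\partial a_j}$ whenever $a_i>a_j$; multiplying by $a_i-a_j>0$ gives $\Delta\ge 0$ at such points, and $\Delta\ge 0$ on all of $\mathbb A^n$ then follows by continuity (the set $a_i=a_j$ being trivial, and $a_i<a_j$ handled by swapping the roles of $i$ and $j$).

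For sufficiency, assume $\psi$ is symmetric with $\Delta\ge 0$ throughout and $\underline a\prec^m\underline b$. Using the Hardy--Littlewood--P\'olya reduction it is enough to show $\psi$ does not increase under a single T-transform, say on coordinates $i,j$ with $b_i\ge b_j$. I would parametrize the transfer by a segment $\underline b(t)$, $t\in[0,1]$, with $i$-th and $j$-th coordinates $b_i-t\delta$ and $b_j+t\delta$ for a suitable $\delta>0$ and the remaining coordinates fixed, so $\underline b(0)=\underline b$ and $\underline b(1)$ is the transform. Since $b_i(t)\ge b_j(t)$ all along the segment, the hypothesis gives $\dfrac{\partial\psi}{\partial a_i}-\dfrac{\partial\psi}{\partial a_j}\ge 0$ there, whence
\[
\frac{d}{dt}\,\psi\!\left(\underline b(t)\right)= -\delta\left(\frac{\partial\psi}{\partial a_i}-\frac{\partial\psi}{\partial a_j}\right)\le 0,
\]
and integrating over $[0,1]$ gives $\psi(\underline b(1))\le\psi(\underline b(0))$; chaining the finitely many transfers yields $\psi(\underline a)\le\psi(\underline b)$, i.e.\ $\psi$ is Schur-convex.

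The main obstacle is the Hardy--Littlewood--P\'olya step --- that majorization can be realized by a finite chain of T-transforms (equivalently Muirhead's lemma, or Rado's theorem that $\{\underline x:\underline x\prec^m\underline b\}$ is the convex hull of the permutations of $\underline b$); everything else is the elementary calculus above. I would cite this combinatorial fact rather than reprove it. The reason for routing the argument through T-transforms instead of an arbitrary smooth path from $\underline b$ to $\underline a$ is that each transfer keeps its two active coordinates ordered, so the sign of $a_i-a_j$ never flips along a segment and the hypothesis $\Delta\ge 0$ applies uniformly --- avoiding any trouble on the hyperplanes $a_i=a_j$.
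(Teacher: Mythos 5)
Your proposal is a correct proof of the Schur--Ostrowski criterion, but note that the paper does not prove this statement at all: it is quoted verbatim as Theorem 3.A.4 of Marshall, Olkin and Arnold \cite{9} and used as a black box. What you have written is essentially the textbook argument from that reference --- symmetry from the mutual majorization of a vector and its permutations, necessity by differentiating along a small Robin Hood transfer $(a_i-\varepsilon,\,a_j+\varepsilon)$, and sufficiency by the Hardy--Littlewood--P\'olya reduction of majorization to a finite chain of T-transforms, integrating $\frac{d}{dt}\psi(\underline b(t))$ along each transfer segment. Two small points you should make explicit to be fully rigorous: (i) at parameter values where the two active coordinates coincide, the hypothesis $\Delta\ge 0$ is vacuous, but symmetry of $\psi$ forces $\partial\psi/\partial a_i=\partial\psi/\partial a_j$ there, so the derivative along the segment is still $\le 0$; (ii) every intermediate vector in the chain stays in $\mathbb{A}^n$ because each T-transform replaces two coordinates by convex combinations of themselves and $\mathbb{A}$ is an interval. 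Your choice to keep the two active coordinates ordered along each segment matches the standard HLP construction (the transfer amount $\delta$ is chosen so the coordinates never cross), so that step is sound provided you cite, as you propose, the decomposition lemma (Lemma 2.B.1 in \cite{9}) rather than reprove it. In short: the argument is correct and self-contained modulo that citation, whereas the paper simply defers the entire statement to \cite{9}.
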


\begin{lemma} \label{lpsi1}\normalfont
	Let $\psi_1:(0,\infty)\times (0,1) \rightarrow (0, \infty)$ be defined as 
	\begin{equation}\label{lemeq1}
	\psi_1(\alpha,y)=\dfrac{y(1-y)^{\alpha - 1}}{1-(1-y)^{\alpha}}.
	\end{equation}
	Then
	\begin{enumerate}[(i)]
		\item $\psi_1(\alpha,y)$ increases with respect to $y $ for $0< \alpha <1$, 
		\item $\psi_1(\alpha,y)$ decreases with respect to $y $ for  $\alpha >1$.
	\end{enumerate}
	
\end{lemma}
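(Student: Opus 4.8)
The plan is to differentiate $\psi_1$ with respect to $y$ and to show that the sign of the derivative is governed by the auxiliary function $h(y) = 1 - \alpha y - (1-y)^{\alpha}$, whose monotonicity is immediate.

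First I would apply the quotient rule to $\psi_1(\alpha,y) = \dfrac{N(y)}{D(y)}$ with $N(y) = y(1-y)^{\alpha-1}$ and $D(y) = 1 - (1-y)^{\alpha}$. A direct computation gives $N'(y) = (1-y)^{\alpha-2}(1-\alpha y)$ and $D'(y) = \alpha(1-y)^{\alpha-1}$, and since $D(y) > 0$ on $(0,1)$ we have $\dfrac{\partial \psi_1}{\partial y} \overset{\text{sign}}{=} N'D - ND'$. The key simplification is that, after factoring out the positive quantity $(1-y)^{\alpha-2}$, the bracket collapses:
$$N'D - ND' = (1-y)^{\alpha-2}\Big[(1-\alpha y)\big(1-(1-y)^{\alpha}\big) - \alpha y(1-y)^{\alpha}\Big] = (1-y)^{\alpha-2}\big(1 - \alpha y - (1-y)^{\alpha}\big).$$
Since $(1-y)^{\alpha-2} > 0$ on $(0,1)$, this yields $\dfrac{\partial \psi_1}{\partial y} \overset{\text{sign}}{=} h(y)$ with $h(y) = 1 - \alpha y - (1-y)^{\alpha}$.

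Next I would analyse $h$ on $(0,1)$. One has $h(0) = 0$ and $h'(y) = \alpha\big((1-y)^{\alpha-1} - 1\big)$. For $0 < \alpha < 1$ the exponent $\alpha - 1$ is negative and $0 < 1-y < 1$, so $(1-y)^{\alpha-1} > 1$; hence $h'(y) > 0$ on $(0,1)$, and since $h(0) = 0$ this forces $h(y) > 0$ on $(0,1)$, so $\psi_1$ is increasing in $y$, giving part (i). For $\alpha > 1$ the exponent is positive, so $(1-y)^{\alpha-1} < 1$, hence $h'(y) < 0$ on $(0,1)$; again using $h(0) = 0$ we get $h(y) < 0$ on $(0,1)$, so $\psi_1$ is decreasing in $y$, giving part (ii).

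I expect the only delicate point to be the algebraic cancellation in the second step: one must check that $(1-\alpha y)\big(1-(1-y)^{\alpha}\big) - \alpha y(1-y)^{\alpha}$ really reduces to $1 - \alpha y - (1-y)^{\alpha}$, which follows by collecting the two terms carrying the factor $(1-y)^{\alpha}$ and noting that their coefficients $-(1-\alpha y)$ and $-\alpha y$ sum to $-1$. Everything else is elementary single-variable calculus together with the sign behaviour of a power of a base lying in $(0,1)$.
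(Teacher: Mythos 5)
Your proposal is correct and follows essentially the same route as the paper: both differentiate $\psi_1$ in $y$, reduce the sign of the derivative to that of the auxiliary function $1-\alpha y-(1-y)^{\alpha}$ (the negative of the paper's $f_1(y)=\alpha y+(1-y)^{\alpha}-1$), and conclude from the value $0$ at $y=0$ together with monotonicity. The only difference is cosmetic: you read off the sign of $h'(y)=\alpha\big((1-y)^{\alpha-1}-1\big)$ directly from whether $(1-y)^{\alpha-1}$ exceeds $1$, whereas the paper differentiates once more; your algebraic identity $(1-\alpha y)\big(1-(1-y)^{\alpha}\big)-\alpha y(1-y)^{\alpha}=1-\alpha y-(1-y)^{\alpha}$ checks out.
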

\begin{proof}
	Differentiating \eqref{lpsi1} partially with respect to $y$ we obtain,
	$$\dfrac{\partial \psi_1(\alpha,y)}{\partial y} \overset{\text{sign}}{=} -(1-y)^{\alpha}(\alpha y +(1-y)^{\alpha}-1).$$
	Let $f_1(y)= \alpha y +(1-y)^{\alpha}-1, 0<y<1$ and $f_1(0)=0$. The derivative of $f_1(y) $ is 
	\begin{align*}
	f_1^{\prime}(y)&= \alpha(1-(1-y)^{\alpha - 1})\\
	&=g_1(y) \text{ (say)}.
	\end{align*}
	Again the derivative of $g_1(y)$ is
	$$g_1^{\prime}(y) = \alpha(\alpha -1)(1-y)^{\alpha -2}.$$
	The two possible cases are 
	\begin{enumerate}[(i)]
		\item for $0<\alpha<1$, $g_1^{\prime}(y) <0$ and $g_1(0)=0 \Rightarrow g_1(y) < 0$, i.e., $f_1^{\prime}(y) <0$. And $f_1(0)=0 \Rightarrow f_1(y) <0$ which implies $\dfrac{\partial \psi_1(\alpha,y)}{\partial y} >0$, 
		\item for $\alpha >1$, $g_1^{\prime}(y) > 0$ and $g_1(0)=0 \Rightarrow g_1(y) > 0$, i.e., $f_1^{\prime}(y) > 0$. And $f_1(0)=0 \Rightarrow f_1(y) > 0$ which implies$ \dfrac{\partial \psi_1(\alpha,y)}{\partial y} < 0$. 
	\end{enumerate}
\end{proof}

Next we shall compare two systems (parallel and series) by using the usual stochastic ordering relations. We shall henceforth understand the distribution function \\ $F(x;\alpha,\beta,\lambda)$ as the distribution function of the Exponentiated Chen distribution. Along with the results we shall present few examples and counterexamples to support the results. The first theorem shows the usual stochastic ordering between the sample minimum or between two series system when only the parameter $\lambda$ is varying and all the other parameters remain constant.
\begin{theorem}
\normalfont	Let $X_1,X_2,\ldots, X_n$ be a set of n-independent random variables such that $X_i \sim F(x;\alpha, \beta, \lambda_i)$, $\alpha > 0, \beta >0, \lambda_i >0$ for $i=1,2,\ldots, n$. Consider another set of n-independent random variables $Y_1,Y_2, \ldots, Y_n$ and the distribution function of each random variable is $F(x;\alpha, \beta, \mu_i)$ for $i=1,2,\ldots, n$. Then 
	\begin{align*}
	\underline{\lambda} \prec^{m} \underline{\mu} \Rightarrow & X_{1:n} \leq_{st} Y_{1:n} (\alpha <1)\\
	& X_{1:n} \geq_{st} Y_{1:n} (\alpha > 1), 
	\end{align*}
	where $\underline{\lambda}=(\lambda_1,\lambda_2,\ldots, \lambda_n)$ and $\underline{\mu}=(\mu_1,\mu_2,\ldots,\mu_n)$.
\end{theorem}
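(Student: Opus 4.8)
The plan is to fix a point $x>0$, write the survival function of the series system as a product over the $n$ components, and show that, viewed as a function of the vector of rate parameters, this product is Schur-convex when $\alpha<1$ and Schur-concave when $\alpha>1$; the theorem then follows from the definition of majorization together with the survival-function characterization of $\leq_{st}$ (item (a) above). Concretely, for a fixed $x>0$ set $t=e^{x^{\beta}}-1>0$, so that $e^{\lambda(1-e^{x^{\beta}})}=e^{-\lambda t}$ and $\overline{F}(x;\alpha,\beta,\lambda)=1-(1-e^{-\lambda t})^{\alpha}$. By independence, the survival function of $X_{1:n}$ at $x$ is
\[
\psi(\underline{\lambda})=\prod_{i=1}^{n}\Bigl[1-(1-e^{-\lambda_i t})^{\alpha}\Bigr]=:\prod_{i=1}^{n}h(\lambda_i),
\]
and that of $Y_{1:n}$ at $x$ is $\psi(\underline{\mu})$. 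Since $0<1-e^{-\lambda t}<1$, we have $0<h(\lambda)<1$ for every $\lambda>0$, so $\psi>0$; also $\psi$ is symmetric and continuously differentiable on $(0,\infty)^{n}$. For $x\le 0$ both survival functions equal $1$, so it suffices to treat $x>0$.

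Next I would invoke the first lemma above (Theorem 3.A.4). Since $\partial\psi(\underline{\lambda})/\partial\lambda_i=\bigl(h'(\lambda_i)/h(\lambda_i)\bigr)\psi(\underline{\lambda})$, for $i\ne j$,
\begin{align*}
\Delta &= (\lambda_i-\lambda_j)\left(\frac{\partial\psi(\underline{\lambda})}{\partial\lambda_i}-\frac{\partial\psi(\underline{\lambda})}{\partial\lambda_j}\right)\\
&= \psi(\underline{\lambda})\,(\lambda_i-\lambda_j)\left(\frac{h'(\lambda_i)}{h(\lambda_i)}-\frac{h'(\lambda_j)}{h(\lambda_j)}\right)\\
&\overset{\text{sign}}{=} (\lambda_i-\lambda_j)\left(\frac{h'(\lambda_i)}{h(\lambda_i)}-\frac{h'(\lambda_j)}{h(\lambda_j)}\right),
\end{align*}
using $\psi(\underline{\lambda})>0$. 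Hence $\psi$ is Schur-convex on $(0,\infty)^{n}$ exactly when $\lambda\mapsto h'(\lambda)/h(\lambda)$ is nondecreasing, and Schur-concave exactly when it is nonincreasing.

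To decide which case occurs, note $h'(\lambda)=-\alpha t\,e^{-\lambda t}(1-e^{-\lambda t})^{\alpha-1}$, so
\[
\frac{h'(\lambda)}{h(\lambda)}=-\alpha t\,\frac{e^{-\lambda t}(1-e^{-\lambda t})^{\alpha-1}}{1-(1-e^{-\lambda t})^{\alpha}}=-\alpha t\,\psi_1\bigl(\alpha,e^{-\lambda t}\bigr),
\]
with $\psi_1$ as in Lemma~\ref{lpsi1}, and $\lambda\mapsto y=e^{-\lambda t}$ is strictly decreasing since $t>0$. For $0<\alpha<1$, Lemma~\ref{lpsi1}(i) gives that $\psi_1(\alpha,\cdot)$ is increasing, hence $\psi_1(\alpha,e^{-\lambda t})$ is decreasing in $\lambda$, hence $h'(\lambda)/h(\lambda)=-\alpha t\,\psi_1(\alpha,e^{-\lambda t})$ is increasing in $\lambda$; therefore $\psi$ is Schur-convex, and $\underline{\lambda}\prec^{m}\underline{\mu}$ forces $\psi(\underline{\lambda})\le\psi(\underline{\mu})$, i.e. the survival function of $X_{1:n}$ is at most that of $Y_{1:n}$ at the arbitrary $x>0$ (with equality for $x\le 0$), which is exactly $X_{1:n}\leq_{st}Y_{1:n}$. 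For $\alpha>1$, Lemma~\ref{lpsi1}(ii) reverses the monotonicity of $\psi_1$, so $h'/h$ is decreasing in $\lambda$, $\psi$ is Schur-concave, and $\underline{\lambda}\prec^{m}\underline{\mu}$ yields $\psi(\underline{\lambda})\ge\psi(\underline{\mu})$, i.e. $X_{1:n}\geq_{st}Y_{1:n}$. I expect the only real pitfall to be the sign bookkeeping in this last step: the negative factor in $h'$, the negative multiplier $-\alpha t$, the orientation-reversing substitution $y=e^{-\lambda t}$, and the direction in which Schur-convexity compares $\psi$-values must all be tracked simultaneously, and it is precisely Lemma~\ref{lpsi1} that isolates the genuine analytic content so that everything else reduces to routine sign-chasing.
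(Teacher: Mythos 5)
Your proposal is correct and follows essentially the same route as the paper's own proof: both apply the Schur--Ostrowski criterion to the product form of $\overline{F}_{X_{1:n}}(x)$ and reduce the sign of $\Delta$ to the monotonicity of $\psi_1(\alpha,\cdot)$ from Lemma~\ref{lpsi1}, your $y=e^{-\lambda t}$ being exactly the paper's $y_i=e^{\lambda_i(1-e^{x^{\beta}})}$. Your packaging of the sign bookkeeping through $h'(\lambda)/h(\lambda)$ is only a cosmetic reorganization of the same argument.
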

\begin{proof}
The survival function of the series system $X_{1:n}$ is 
\begin{equation} \label{1}
\overline{F}_{X_{1:n}}(x)= \displaystyle\prod_{k=1}^n\left[1-\left(1-e^{\lambda_k(1-e^{x^{\beta}})}\right)^{\alpha}\right].
\end{equation}
$\overline{F}_{X_{1:n}}(x)$ is symmetric with respect to the vector $\underline{\lambda}=(\lambda_1,\lambda_2,\ldots, \lambda_n)$.
Partially differentiating \eqref{1} with respect to $\lambda_i$, 
\begin{equation}
\dfrac{\partial}{\partial \lambda_i}(\overline{F}_{X_{1:n}}(x))= \overline{F}_{X_{1:n}}(x)\dfrac{\alpha(1-e^{x^{\beta}})e^{\lambda_i (1-e^{x^{\beta}})}\left(1-  e^{\lambda_i(1-e^{x^{\beta}})}\right)^{\alpha -1}}{\left(1-\left(1-e^{\lambda_i(1-e^{x^{\beta}})}\right)^{\alpha}\right)}.
\end{equation}
Assume $\lambda_i \neq \lambda_j$ for $i,j=1,2,\ldots,n$ and consider 
\begin{align*}
\Delta_1 &= (\lambda_i - \lambda_j)\left(\dfrac{\partial}{\partial \lambda_i}(\overline{F}_{X_{1:n}}(x)) - \dfrac{\partial}{\partial \lambda_j}(\overline{F}_{X_{1:n}}(x))\right)\\
&=\alpha (\lambda_i - \lambda_j)\overline{F}_{X_{1:n}}(x)(1-e^{x^{\beta}})\left(\psi_1(\alpha, y_i)-\psi_1(\alpha, y_j)\right),
\end{align*}
where $y_i = e^{\lambda_i(1-e^{x^{\beta}})}$ and the function $\psi_1(\alpha,y_i)$ follows from \eqref{lemeq1}. \\
Thus using Lemma \ref{lpsi1}, when $0<\alpha<1$, $\Delta_1 >0$ $\Rightarrow \overline{F}_{X_{1:n}}(x)$ is Schur-convex with respect to $\underline{\lambda}$. And for  $\alpha >1 , \Delta_1 <0 \Rightarrow \overline{F}_{X_{1:n}}(x)$ is Schur-concave with respect to $\underline{\lambda}$ .\\
Finally we obtain the following two cases:
\begin{enumerate}[(i)]
	\item for $0<\alpha <1$, $\underline{\lambda} \prec^{m} \underline{\mu} \Rightarrow \overline{F}_{X_{1:n}}(x) \leq \overline{F}_{Y_{1:n}}(x)$,
	\item for $\alpha >1$, $\underline{\lambda} \prec^{m} \underline{\mu} \Rightarrow \overline{F}_{X_{1:n}}(x) \geq \overline{F}_{Y_{1:n}}(x)$.
\end{enumerate}
Hence the result follows.\\
\end{proof}
A realization of the above result can be observed with the help of the following example.
\begin{example}
Consider two series systems with $4$ components each, such that the parameter vectors are taken as $\underline{\lambda}$ $=(0.8,1.2,1.3,1.9)$ and $\underline{\mu}$ $=(0.5,0.7,1.5,2.5)$, $\underline{\lambda} \prec^{m} \underline{\mu}$, $\beta=2$. Certainly theorem 1 holds true. But we observe that the above result cannot be extended to hazard rate ordering. The functional values of $\dfrac{\overline{F}_{Y_{1:4}}(x)}{\overline{F}_{X_{1:4}}(x)} = f_1(x)$ (correct upto 3 decimal places) are observed as $f_{1}(0.4)= 1.032$, $f_1(0.6)=1.051$, $f_1(1.2) = 1.033$, $f_1(1.4) = 1.008 $ when $\alpha = 0.7$ and $f_{1}(0.4)= 0.975$, $f_1(0.6)=0.943$, $f_1(1.2) = 0.948$, $f_1(1.4) = 0.987 $ when $\alpha = 1.5$. Hence $\dfrac{\overline{F}_{Y_{1:4}}(x)}{\overline{F}_{X_{1:4}}(x)}$ is not monotone. 
	
\end{example}

%
%

The next result shows the usual stochastic ordering between two maximum order statistics when the parameters $\alpha$, $\beta$ are constant but only the parameter $\lambda$ is varying.
\begin{theorem}
\normalfont	Let $X_1,X_2,\ldots,X_n$ be a set of n-independent random variables where each $X_i \sim F(x;\alpha,\beta,\lambda_i)$, $\alpha >0,\beta >0,\lambda_i >0$ for $i=1,2,\ldots, n$. Another set $Y_1,Y_2,\ldots, Y_n$ be n-independent random variables such that $Y_i \sim F(x;\alpha,\beta,\mu_i)$ for $i=1,2,\ldots, n$.  Then
	$$\underline{\lambda} \prec^m \underline{\mu} \Rightarrow X_{n:n} \leq_{st} Y_{n:n},$$
	where $\underline{\lambda}=(\lambda_1,\lambda_2,\ldots, \lambda_n)$ and $\underline{\mu}=(\mu_1,\mu_2,\ldots,\mu_n)$.
\end{theorem}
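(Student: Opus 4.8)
The plan is to follow the template of the proof of Theorem~1, but to work with the distribution function of the parallel system $X_{n:n}$ in place of the survival function. First I would record that
\[
F_{X_{n:n}}(x)=\prod_{k=1}^{n}\left(1-e^{\lambda_k(1-e^{x^{\beta}})}\right)^{\alpha},
\]
which is symmetric in $\underline{\lambda}=(\lambda_1,\ldots,\lambda_n)$, and note that proving $X_{n:n}\leq_{st}Y_{n:n}$ is the same as proving $F_{X_{n:n}}(x)\geq F_{Y_{n:n}}(x)$ for every $x>0$. Since $\underline{\lambda}\prec^{m}\underline{\mu}$, this follows once $F_{X_{n:n}}(x)$ has been shown to be Schur-concave in $\underline{\lambda}$ for each fixed $x>0$.

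To establish Schur-concavity I would invoke Lemma~1. Putting $w=e^{x^{\beta}}-1>0$ for $x>0$ (so that $1-e^{x^{\beta}}=-w<0$), partial differentiation yields
\[
\frac{\partial}{\partial\lambda_i}F_{X_{n:n}}(x)=\alpha w\,F_{X_{n:n}}(x)\,\frac{e^{-\lambda_i w}}{1-e^{-\lambda_i w}},
\]
and hence, assuming $\lambda_i\neq\lambda_j$,
\[
\Delta=(\lambda_i-\lambda_j)\left(\frac{\partial F_{X_{n:n}}(x)}{\partial\lambda_i}-\frac{\partial F_{X_{n:n}}(x)}{\partial\lambda_j}\right)=\alpha w\,F_{X_{n:n}}(x)\,(\lambda_i-\lambda_j)\bigl(g(\lambda_i)-g(\lambda_j)\bigr),
\]
where $g(\lambda)=(e^{\lambda w}-1)^{-1}$. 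Since $g$ is strictly decreasing on $(0,\infty)$ for every fixed $w>0$, we get $(\lambda_i-\lambda_j)(g(\lambda_i)-g(\lambda_j))\leq 0$, so $\Delta\leq 0$; by Lemma~1 the function $F_{X_{n:n}}(x)$ is Schur-concave in $\underline{\lambda}$, and the theorem follows as indicated above.

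I do not expect a genuine obstacle here; the only points that need care are the sign bookkeeping — in particular that $1-e^{x^{\beta}}<0$, which is what makes $w$ and the prefactor $\alpha w\,F_{X_{n:n}}(x)$ strictly positive — and the (elementary) monotonicity of $g$. The conceptual content is the contrast with Theorem~1: here $\alpha>0$ enters $\Delta$ only through that positive prefactor and cannot flip its sign, whereas for the series system the corresponding factor $\psi_1(\alpha,y)$ had opposite monotonicity in the two regimes $\alpha<1$ and $\alpha>1$; this is exactly why the parallel-system statement carries no restriction on $\alpha$. If one prefers to avoid Lemma~1, the same conclusion is reached by noting that $\log F_{X_{n:n}}(x)=\alpha\sum_{k=1}^{n}\log\!\bigl(1-e^{\lambda_k(1-e^{x^{\beta}})}\bigr)$ is a positive multiple of a separable sum whose single-variable summand has second derivative $-c^{2}e^{c\lambda}/(1-e^{c\lambda})^{2}<0$ with $c=1-e^{x^{\beta}}$, hence is concave, so the sum is Schur-concave, and composing with the increasing function $\exp$ preserves Schur-concavity.
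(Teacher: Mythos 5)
Your proof is correct and follows essentially the same route as the paper: both apply the Schur--Ostrowski criterion (Lemma~1) to $F_{X_{n:n}}(x)$ and reduce the sign of $\Delta$ to the monotonicity of a single-variable function, your $g(\lambda)=(e^{\lambda w}-1)^{-1}$ being just $-\phi_1(\lambda)$ from the paper with the sign moved into the (now positive) prefactor. The sign bookkeeping and the observation that $\alpha$ cannot flip the sign are handled correctly, so no issues.
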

\begin{proof}
The distribution function corresponding to the maximum order statistic is
\begin{equation} \label{2}
F_{X_{n:n}}(x)=\prod_{k=1}^n\left(1-e^{\lambda_k(1-e^{x^{\beta}})}\right)^{\alpha}.
\end{equation}
$F_{X_{n:n}}(x)$ is symmetric with respect to the parameter vector $\underline{\lambda}=(\lambda_1,\lambda_2,\ldots, \lambda_n)$.
Consider for $\lambda_i \neq \lambda_j$, $i \neq j$, 
\begin{align*}
\Delta_2 &= (\lambda_i-\lambda_j)\left(\dfrac{\partial}{\partial \lambda_i}(F_{X_{n:n}}(x))- \dfrac{\partial}{\partial \lambda_j}(F_{X_{n:n}}(x))\right)\\
&= \alpha (1-e^{x^{\beta}}) F_{X_{n:n}}(x) (\lambda_i-\lambda_j)(\phi_1(\lambda_i) - \phi_1(\lambda_j)),
\end{align*}
where $\phi_1(\lambda)=1-\dfrac{1}{1-e^{\lambda(1-e^{x^{\beta}})}}$. In order to determine the sign of $\Delta_2$, we evaluate the derivative of $\phi_1(\lambda)$,
$$\phi_1^{\prime}(\lambda)=\dfrac{(e^{x^{\beta}}-1)e^{\lambda(1-e^{x^{\beta}})}}{(1-e^{\lambda(1-e^{x^{\beta}})})^2}.$$
We observe that $\phi_1^{\prime}(\lambda) > 0$, i.e., $\phi_1(\lambda)$ is an increasing function of $\lambda$. Thus $\Delta_2 \leq 0$  $\Rightarrow$ $F_{X_{n:n}}$ is Schur-concave. Hence result follows.
\end{proof}
\begin{example}
Consider Example 1, the same set of 4 components forms the parallel system and theorem 2 holds true here. Reversed hazard rate ordering may exist as observed in Figure 1, though the analytical proof is not available due to rigorous calculations. 
	\begin{figure}[h]
	\centering
	\begin{tabular}{c}
		\includegraphics[scale=0.3]{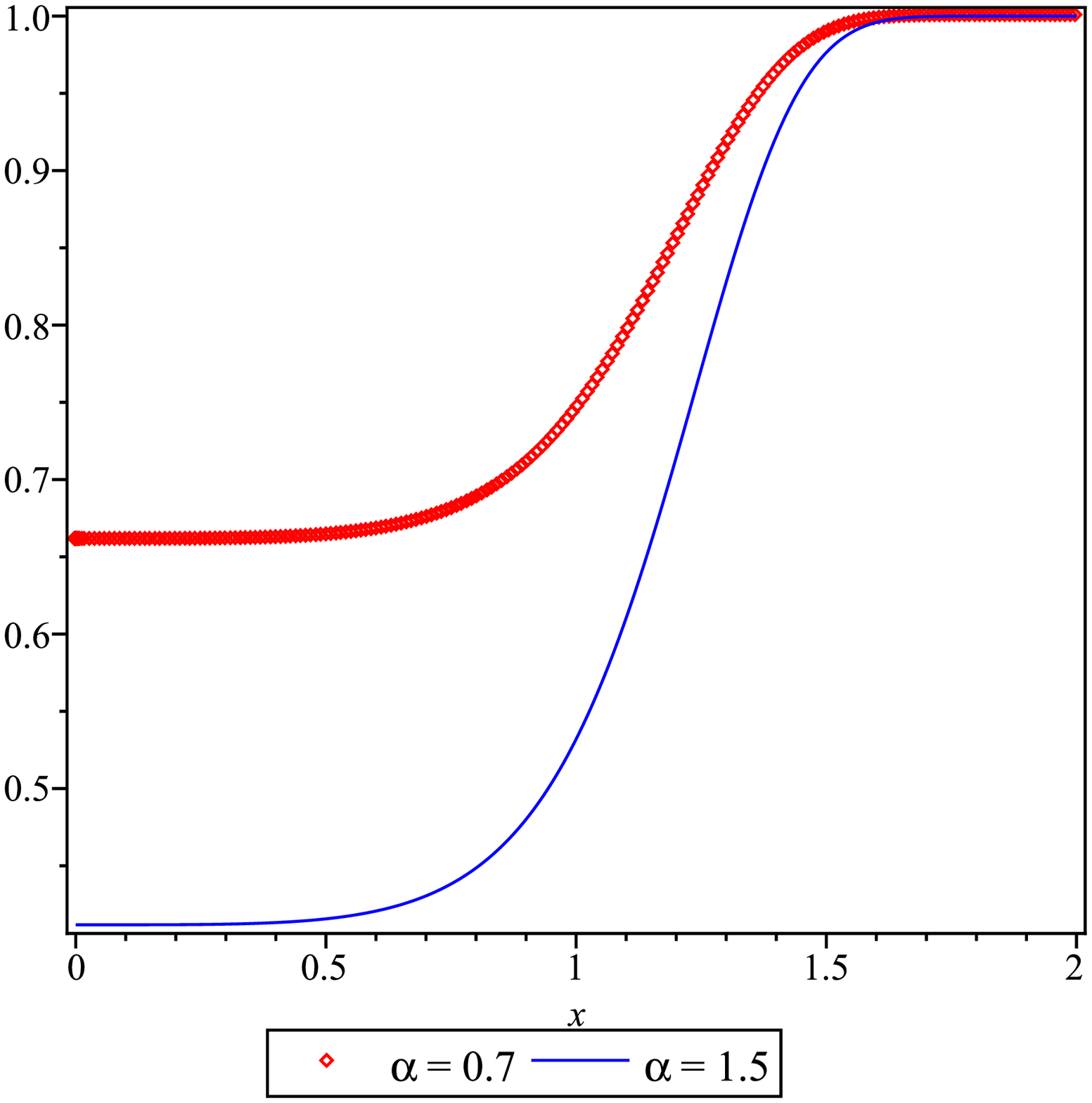}\\
		
		 \textbf{Fig 1} shows the ratio $\dfrac{F_{Y_{4:4}}}{F_{X_{4:4}}}$ for $\alpha=0.7,1.5$.
	\end{tabular}
\end{figure}
Whereas hazard rate ordering is not possible, a close look on the functional values of the ratio $\dfrac{\overline{F}_{Y_{4:4}}}{\overline{F}_{X_{4:4}}} = f_2(x)$ (say) for $\alpha=0.7$ are $f_2(1.9524) = 521403$, $f_2(1.9528) = 503289$, $f_2(1.9536) = 703257$  and $f_2(1.9524) = 558646$, $f_2(1.9528) = 539238$, $f_2(1.954) = 727174$ for $\alpha=1.5$. These values show that $\dfrac{\overline{F}_{Y_{4:4}}}{\overline{F}_{X_{4:4}}}$ is not monotone.
	\end{example}
In the next theorem, we observe usual stochastic ordering relation between two parallel system when the parameter $\beta$ is varied and all the other parameters remain constant.
\begin{theorem}
\normalfont	Consider two parallel systems consisting of n components, the components of one system corresponds to the set of n-independent random variables $X_1,X_2,\ldots,X_n$ such that $X_i \sim F(x,\alpha,\beta_i,\lambda)$ for $i=1,2,\ldots,n$. Let $Y_1,Y_2,\ldots,Y_n$ be the random variables corresponding to the components of the other system, and $Y_i \sim F(x,\alpha,\beta_i^{*},\lambda)$ for $i=1,2,\ldots,n$. Then $\lambda >1$ and  $\underline{\beta} \prec^m \underline{\beta}^{*}$ $\Rightarrow X_{n:n} \leq_{st} Y_{n:n}$, where $\underline{\beta}=(\beta_1,\beta_2,\ldots,\beta_n)$ and $\underline{\beta}^{*}=(\beta_1^{*},\beta_2^{*},\ldots,\beta_n^{*})$
\end{theorem}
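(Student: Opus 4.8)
The plan is to follow the template of the proofs of Theorems~1 and~2: translate the asserted usual stochastic order into a Schur-concavity statement for the distribution function of $X_{n:n}$, viewed as a function of the shape-parameter vector $\underline{\beta}$, and then invoke Lemma~1. Since $X_{n:n}\leq_{st}Y_{n:n}$ is equivalent to $F_{X_{n:n}}(x)\geq F_{Y_{n:n}}(x)$ for every $x>0$, and $\underline{\beta}\prec^{m}\underline{\beta}^{*}$, it is enough to show that for each fixed $x>0$ the map
\[
\underline{\beta}\ \longmapsto\ F_{X_{n:n}}(x)=\prod_{k=1}^{n}\left(1-e^{\lambda(1-e^{x^{\beta_{k}}})}\right)^{\alpha}
\]
is Schur-concave on $(0,\infty)^{n}$ when $\lambda>1$. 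This function is symmetric in $\underline{\beta}$, so by Lemma~1 it suffices to check that $\Delta_{3}:=(\beta_{i}-\beta_{j})\left(\frac{\partial}{\partial\beta_{i}}F_{X_{n:n}}(x)-\frac{\partial}{\partial\beta_{j}}F_{X_{n:n}}(x)\right)\leq 0$ for all $i\neq j$.

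First I would carry out a logarithmic differentiation, obtaining
\[
\frac{\partial}{\partial\beta_{i}}F_{X_{n:n}}(x)=F_{X_{n:n}}(x)\,\alpha\lambda\ln x\,\phi_{2}(\beta_{i}),\qquad
\phi_{2}(\beta)=\frac{x^{\beta}e^{x^{\beta}}e^{\lambda(1-e^{x^{\beta}})}}{1-e^{\lambda(1-e^{x^{\beta}})}},
\]
so that $\Delta_{3}=\alpha\lambda\,F_{X_{n:n}}(x)\,\ln x\,(\beta_{i}-\beta_{j})\left(\phi_{2}(\beta_{i})-\phi_{2}(\beta_{j})\right)$. Writing $w=x^{\beta}$ we have $\phi_{2}(\beta)=\Phi(w)$ with
\[
\Phi(w)=\frac{w e^{w}e^{\lambda(1-e^{w})}}{1-e^{\lambda(1-e^{w})}}=\frac{w e^{w}}{e^{\lambda(e^{w}-1)}-1},\qquad w>0 .
\]
Since $(\beta_{i}-\beta_{j})(x^{\beta_{i}}-x^{\beta_{j}})$ has the same sign as $\ln x$, a routine sign analysis yields $\Delta_{3}\overset{\text{sign}}{=}(w_{i}-w_{j})\left(\Phi(w_{i})-\Phi(w_{j})\right)$ when $x\neq 1$, while $\Delta_{3}=0$ when $x=1$ (there $\ln x=0$). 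Hence the whole theorem reduces to the single claim that $\Phi$ is decreasing on $(0,\infty)$.

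The hard part is precisely this monotonicity of $\Phi$, and it is where the hypothesis $\lambda>1$ enters: for $0<\lambda<1$ one can exhibit points at which $\Phi$ is increasing, so the restriction on $\lambda$ is genuinely needed. I would prove $\Phi'<0$ by establishing
\[
(\ln\Phi)'(w)=\frac{1}{w}+1-\frac{\lambda e^{w}e^{\lambda(e^{w}-1)}}{e^{\lambda(e^{w}-1)}-1}<0 .
\]
A convenient simplification is that the subtracted quantity equals $\dfrac{\lambda e^{w}}{1-e^{-\lambda(e^{w}-1)}}$ and is increasing in $\lambda$ (its $\lambda$-derivative has numerator $1-(1+\lambda(e^{w}-1))e^{-\lambda(e^{w}-1)}$, which is positive because $e^{t}>1+t$ for $t>0$); hence it is enough to verify $(\ln\Phi)'(w)\leq 0$ at $\lambda=1$, strictness for $\lambda>1$ then being automatic. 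At $\lambda=1$ the inequality becomes $(1+w)\left(1-e^{-(e^{w}-1)}\right)<w e^{w}$, which I would deduce from the elementary estimate $1-e^{-y}<\frac{2y}{2+y}$ for $y>0$ (applied with $y=e^{w}-1$) together with a short derivative argument showing that the residual polynomial--exponential inequality $w e^{2w}-w e^{w}-2e^{w}+2+2w\geq 0$ holds for all $w\geq 0$ (it vanishes along with its first two derivatives at $w=0$ and has positive second derivative on $(0,\infty)$).

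Finally, once $\Delta_{3}\leq 0$ is known for all $i\neq j$ and all $x>0$, Lemma~1 shows that $F_{X_{n:n}}(x)$ is Schur-concave in $\underline{\beta}$; thus $\underline{\beta}\prec^{m}\underline{\beta}^{*}$ forces $F_{X_{n:n}}(x)\geq F_{Y_{n:n}}(x)$ for every $x$, i.e.\ $\overline{F}_{X_{n:n}}(x)\leq\overline{F}_{Y_{n:n}}(x)$, which is exactly $X_{n:n}\leq_{st}Y_{n:n}$. In the write-up I would isolate the decreasingness of $\Phi$ as a separate lemma (the analogue of Lemma~\ref{lpsi1} for the present setting); everything else parallels the earlier proofs.
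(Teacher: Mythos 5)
Your proposal is correct and its skeleton is exactly the paper's: reduce $X_{n:n}\leq_{st}Y_{n:n}$ to Schur-concavity of $F_{X_{n:n}}(x)$ in $\underline{\beta}$ via Lemma~1, compute the same $\Delta_3=\alpha\lambda\ln x\,F_{X_{n:n}}(x)(\beta_i-\beta_j)\bigl(\phi_2(x^{\beta_i})-\phi_2(x^{\beta_j})\bigr)$, and reduce everything to the decreasingness of the same pivotal function ($\phi_2(t)=te^{\lambda(1-e^t)+t}/(1-e^{\lambda(1-e^t)})$, your $\Phi$). Where you diverge is in proving that monotonicity. The paper differentiates $\phi_2$ directly: the numerator is $g(t)=(t+1)e^{\lambda(1-e^t)}+\lambda te^t-t-1$ with $g(0)=0$, $g'(t)=(1-e^{\lambda(1-e^t)})(\lambda(t+1)e^t-1)$, and the hypothesis $\lambda>1$ enters simply through $h(t)=\lambda(t+1)e^t-1$ being increasing with $h(0)=\lambda-1>0$; this is a short two-level derivative argument. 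You instead work with $(\ln\Phi)'$, observe that $\lambda e^w/(1-e^{-\lambda(e^w-1)})$ is increasing in $\lambda$ (so it suffices to treat $\lambda=1$), and then settle the $\lambda=1$ case via the Pad\'e-type bound $1-e^{-y}<2y/(2+y)$ plus the inequality $we^{2w}-we^w-2e^w+2+2w\geq 0$; all of these steps check out, but this route is noticeably heavier than the paper's for the same conclusion. Two points in your favor: you make explicit the sign bookkeeping showing $\Delta_3\overset{\text{sign}}{=}(w_i-w_j)(\Phi(w_i)-\Phi(w_j))$ for $x\neq1$ (and $\Delta_3=0$ at $x=1$), which the paper glosses over when $\ln x<0$, and your remark that $\Phi$ can be increasing for $0<\lambda<1$ correctly identifies why the restriction $\lambda>1$ is not an artifact of the method.
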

\begin{proof}
 The distribution function corresponding to $X_{n:n}$ is
\begin{equation}\label{3}
F_{X_{n:n}}(x)=\prod_{k=1}^n\left(1-e^{\lambda(1-e^{x^{\beta_k}})}\right)^{\alpha}.
\end{equation}
$F_{X_{n:n}}(x)$ is symmetric with respect to the parameter vector $\underline{\beta}=(\beta_1,\beta_2,\ldots,\beta_n)$.


Consider $\beta_i \neq \beta_j$, for $i \neq j$
\begin{align*}
\Delta_3&=(\beta_i-\beta_j)\left(\dfrac{\partial}{\partial \beta_i}F_{X_{n:n}}(x)-\dfrac{\partial}{\partial \beta_j}F_{X_{n:n}}(x)\right)\\
&=\alpha\lambda \ln x F_{X_{n:n}}(x)(\beta_i-\beta_j)(\phi_2(x^{\beta_i})-\phi_2(x^{\beta_j})),
\end{align*}
where $\phi_2(t)=\dfrac{te^{\lambda(1-e^t)+t}}{1-e^{\lambda(1-e^t)}}$. Computing $\phi_2^{\prime}(t)$, we observe
$$\phi_2^{\prime}(t)=-\dfrac{e^{\lambda(1-e^t)+t}((t+1)e^{\lambda(1-e^t)}+\lambda te^t-t-1)}{(e^{\lambda(1-e^t)}-1)^2}.$$
Let $g(t)=(t+1)e^{\lambda(1-e^t)}+\lambda t e^t -t-1$ and $g(0)=0$.\\
Also, $g^{\prime}(t)=(1-e^{\lambda(1-e^t)})(\lambda(t+1)e^t-1)$. In order to determine the sign of $g^{\prime}(t)$, let $h(t)=\lambda(t+1)e^t-1$ and $h(0)=\lambda-1$. Again differentiating $h(t)$, we obtain
\begin{align*}
h^{\prime}(t)&=\lambda e^t(t+2)\\
&>0 ~ \forall ~ t >0,
\end{align*}
i.e., $h(t)$ is an increasing function of $t$.
Thus, $h(t) >0$ when $\lambda>1$, and this implies $g(t)$ is an increasing function of $t$, i.e., $g(t)>0$. Finally we conclude that $\Delta_3 \leq 0$, or in other words $F_{X_{n:n}}$ is Schur concave with respect to the parameter vector $\underline{\beta}$. Hence result follows. 
\end{proof}
We can realize the above theorem with the help of the following example.
\begin{example}
	Let us consider a  4 component parallel system, such that the random variables corresponding to each component follows the distribution function $F(x, 0.6, \beta_i,2)$, as described in theorem 3. Let $\underline{\beta}= (0.4,0.9,2,7.5) $ and $\underline{\beta}^{*}=(0.2,1,1.9,7.7)$, here $\underline{\beta} \prec^{m} \underline{\beta}^{*}$. Usual stochastic ordering exists in this case as discussed in Theorem 3. \\
	Also this result cannot be extended further to hazard rate order or reversed hazard rate order as the functional value for $f_3(x) = \dfrac{F_{Y_{4:4}}(x)}{F_{X_{4:4}}(x)}$ and $f_4(x) = \dfrac{\overline{F}_{Y_{4:4}}(x)}{\overline{F}_{X_{4:4}}(x)}$ are $f_3(0.085)= 0.0512$, $f_3(0.086) = 0.0488$, $f_3(0.087) = 0.0513$ and $f_4(9.6) = 1.453 \times 10^{6}$, $f_4(9.8) = 2.729 \times 10^{6}$, $f_4(9.9) = 2.646 \times 10^{6}$ respectively. This shows that both the functions $\dfrac{F_{Y_{4:4}}(x)}{F_{X_{4:4}}(x)}$ and $\dfrac{\overline{F}_{Y_{4:4}}(x)}{\overline{F}_{X_{4:4}}(x)}$ are not monotone.
%
%
%
%
%
%

	\end{example}
The next result describes the usual stochastic order relation for the minimum ordered statistic with the parameters $\beta,\lambda$ being constant and only the parameter $\alpha$ varies.
\begin{theorem}
\normalfont	Let $X_1,X_2,\ldots,X_n$ be the n-independent random variables corresponding to the components of a series system such that each $X_i$ follows the survival function $\overline{F}(x,\alpha_i,\beta,\lambda)$ for $i=1,2,\ldots,n$. Let $Y_1,Y_2,\ldots,Y_n$ be the set of n-independent random variables corresponding to another series system where the random variables $Y_i \sim \overline{F}(x,\alpha_i^{*},\beta,\lambda)$ for $i=1,2,\ldots,n$. Then as $\underline{\alpha} \prec^m \underline{\alpha}^{*} \Rightarrow X_{1:n} \leq_{st} Y_{1:n}$, where $\underline{\alpha}=(\alpha_1,\alpha_2,\ldots,\alpha_n)$, $\underline{\alpha}^{*}=(\alpha_1^{*},\alpha_2^{*},\ldots,\alpha_n^{*})$
\end{theorem}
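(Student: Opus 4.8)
\emph{Approach.} I would proceed exactly as in the proofs of Theorems 1--3: fix $x>0$, regard the series-system survival function $\overline{F}_{X_{1:n}}(x)$ as a function of the vector $\underline{\alpha}\in(0,\infty)^n$, and invoke Lemma 1 to read off its Schur behaviour. Since $X_{1:n}\leq_{st}Y_{1:n}$ is by definition the pointwise inequality $\overline{F}_{X_{1:n}}(x)\leq\overline{F}_{Y_{1:n}}(x)$ for all $x$, and we are given $\underline{\alpha}\prec^m\underline{\alpha}^{*}$, the target reduces to showing that $\overline{F}_{X_{1:n}}(x)$ is \emph{Schur-convex} in $\underline{\alpha}$ on $(0,\infty)^n$.

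\emph{Setup.} I would first record the survival function of the minimum and abbreviate the $x$-dependent quantity:
$$\overline{F}_{X_{1:n}}(x)=\prod_{k=1}^n\left[1-\left(1-e^{\lambda(1-e^{x^{\beta}})}\right)^{\alpha_k}\right]=\prod_{k=1}^n\left(1-v^{\alpha_k}\right),\qquad v:=1-e^{\lambda(1-e^{x^{\beta}})},$$
noting that $0<v<1$ for every $x>0$ (because $\lambda(1-e^{x^{\beta}})<0$) and that the product is symmetric in $\underline{\alpha}$, so Lemma 1 applies. Differentiating via $\partial_{\alpha_i}(1-v^{\alpha_i})=-v^{\alpha_i}\ln v$ gives
$$\frac{\partial}{\partial\alpha_i}\overline{F}_{X_{1:n}}(x)=\overline{F}_{X_{1:n}}(x)\,\phi_3(\alpha_i),\qquad \phi_3(\alpha):=\frac{-v^{\alpha}\ln v}{1-v^{\alpha}}>0,$$
so that for $\alpha_i\neq\alpha_j$,
$$\Delta_4:=(\alpha_i-\alpha_j)\left(\frac{\partial}{\partial\alpha_i}-\frac{\partial}{\partial\alpha_j}\right)\overline{F}_{X_{1:n}}(x)=\overline{F}_{X_{1:n}}(x)\,(\alpha_i-\alpha_j)\bigl(\phi_3(\alpha_i)-\phi_3(\alpha_j)\bigr).$$
By Lemma 1, Schur-convexity of $\overline{F}_{X_{1:n}}(x)$ in $\underline{\alpha}$ is exactly the statement $\Delta_4\geq0$, which in turn holds precisely when $\phi_3$ has the monotonicity that makes $(\alpha_i-\alpha_j)\bigl(\phi_3(\alpha_i)-\phi_3(\alpha_j)\bigr)\geq0$.

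\emph{Main step and obstacle.} Everything thus collapses to a single one-variable monotonicity statement for $\phi_3$ on $(0,\infty)$, and this is the step I expect to be hardest to control; it is precisely this sign determination that fixes the direction of the comparison. I would analyse it in the style of Lemma 2: introduce $c=-\ln v>0$, clear the factor $v^{\alpha}$ to put $\phi_3$ in a simpler closed form, differentiate, and reduce the sign of $\phi_3'(\alpha)$ to that of an auxiliary function with a known value at $\alpha=0$, mirroring the $f_1$--$g_1$ chain used there. The delicate point is that this sign must be settled uniformly over the entire range $0<v<1$, hence over all $x>0$. Once $\phi_3$ is shown to be monotone in the sense that yields $\Delta_4\geq0$, Lemma 1 delivers Schur-convexity of $\overline{F}_{X_{1:n}}(x)$ in $\underline{\alpha}$, and the definition of $\leq_{st}$ converts $\underline{\alpha}\prec^m\underline{\alpha}^{*}$ into $\overline{F}_{X_{1:n}}(x)\leq\overline{F}_{Y_{1:n}}(x)$ for every $x$, which is exactly $X_{1:n}\leq_{st}Y_{1:n}$.
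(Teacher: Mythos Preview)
Your outline is exactly the paper's approach: write $\overline{F}_{X_{1:n}}(x)=\prod_k(1-v^{\alpha_k})$ with $v=1-e^{\lambda(1-e^{x^\beta})}\in(0,1)$, note symmetry, differentiate, and reduce the Schur condition to a one-variable monotonicity. The paper factors the partial derivative slightly differently, pulling out $\ln v$ and working with $\psi_2(\alpha)=1-\dfrac{1}{1-v^{\alpha}}$; since $\phi_3(\alpha)=(\ln v)\,\psi_2(\alpha)$, the two reductions are equivalent.

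The step you flag as ``hardest'' is in fact immediate, and carrying it out shows your stated target is the wrong one. With $c:=-\ln v>0$ one has
\[
\phi_3(\alpha)=\frac{-v^{\alpha}\ln v}{1-v^{\alpha}}=\frac{c}{e^{c\alpha}-1},
\]
which is strictly \emph{decreasing} in $\alpha$ for every $c>0$. Hence $(\alpha_i-\alpha_j)\bigl(\phi_3(\alpha_i)-\phi_3(\alpha_j)\bigr)\le 0$, so $\Delta_4\le 0$ and $\overline{F}_{X_{1:n}}(x)$ is Schur-\emph{concave} in $\underline{\alpha}$, not Schur-convex. This is precisely what the paper computes (their $\psi_2$ is increasing, which is the same thing after the negative factor $\ln v$). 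Be aware, then, that Schur-concavity together with $\underline{\alpha}\prec^m\underline{\alpha}^{*}$ yields $\overline{F}_{X_{1:n}}(x)\ge\overline{F}_{Y_{1:n}}(x)$, i.e.\ $X_{1:n}\ge_{st}Y_{1:n}$; the paper's final line reverses this inequality, and its Example~4 in fact has $\underline{\alpha}^{*}\prec^m\underline{\alpha}$ rather than $\underline{\alpha}\prec^m\underline{\alpha}^{*}$, consistent with the Schur-concave direction. So your plan will go through, but it proves the inequality opposite to the one you set out to establish.
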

\begin{proof}
The survival function of $X_{1:n}$ is 
\begin{equation} \label{alpha}
\overline{F}_{X_{1:n}}(x)= \prod_{k=1}^n\left(1-(1-e^{\lambda(1-e^{x^{\beta}})})^{\alpha_k}\right)
\end{equation}
$\overline{F}_{X_{1:n}}(x)$ is symmetric with respect to the parameter vector $\underline{\alpha}=(\alpha_1,\alpha_2,\ldots,\alpha_n)$, 
For $\alpha_i \neq \alpha_j$, $i \neq j$, consider 
\begin{align*}
\Delta_4 &= (\alpha_i-\alpha_j)\left(\dfrac{\partial}{\partial \alpha_i}\overline{F}_{X_{1:n}}(x) - \dfrac{\partial}{\partial \alpha_j}\overline{F}_{X_{1:n}}(x)\right)\\
&= (\alpha_i-\alpha_j)\overline{F}_{X_{1:n}}(x) \ln(1-e^{\lambda(1-e^{x^{\beta}})}) (\psi_2(\alpha_i)-\psi_2(\alpha_j))
\end{align*}
where $\psi_2(\alpha)=1-\dfrac{1}{1-(1-e^{\lambda(1-e^{x^{\beta}})})^{\alpha}}$.\\
It has been observed that $\psi_2^{\prime}(\alpha) = -\ln(1-e^{\lambda(1-e^{x^{\beta}})})\dfrac{(1-e^{\lambda(1-e^{x^{\beta}})})^{\alpha}}{(1-(1-e^{\lambda(1-e^{x^{\beta}})})^{\alpha})^2}$, and the quantity is positive.
Therefore, $\Delta_4 \leq 0$, i.e., $\overline{F}_{X_{1:n}}(x)$ is Schur concave. Hence, $\underline{\alpha} \prec \underline{\alpha}^{*}$ $\Rightarrow \overline{F}_{X_{1:n}}(x) \leq \overline{F}_{Y_{1:n}}(x)$ and the result follows.
\end{proof}
\begin{example}
	Consider two series systems each having 3 components. The survival function of each of these components is $\overline{F}(x,\alpha_i,\beta, \lambda)$ for one system (random variables corresponding to each component is $X_1, X_2, X_3$) and  $\overline{F}(x,\alpha_i^{*},\beta, \lambda)$ for the other system (corresponding random variables are $Y_1, Y_2, Y_3$). The parameters, $\beta=3$, $\lambda= 2$, and $\underline{\alpha}= (0.2,1,2.4)$, $\underline{\alpha}^{*}= (0.4,1,2.2)$. The plot for the difference of their survival function, $\overline{F}_{X_{1:3}}(x) - \overline{F}_{Y_{1:3}}(x)$.
	\begin{figure}[h]
		\centering
		\begin{tabular}{c}
			\includegraphics[scale=0.3]{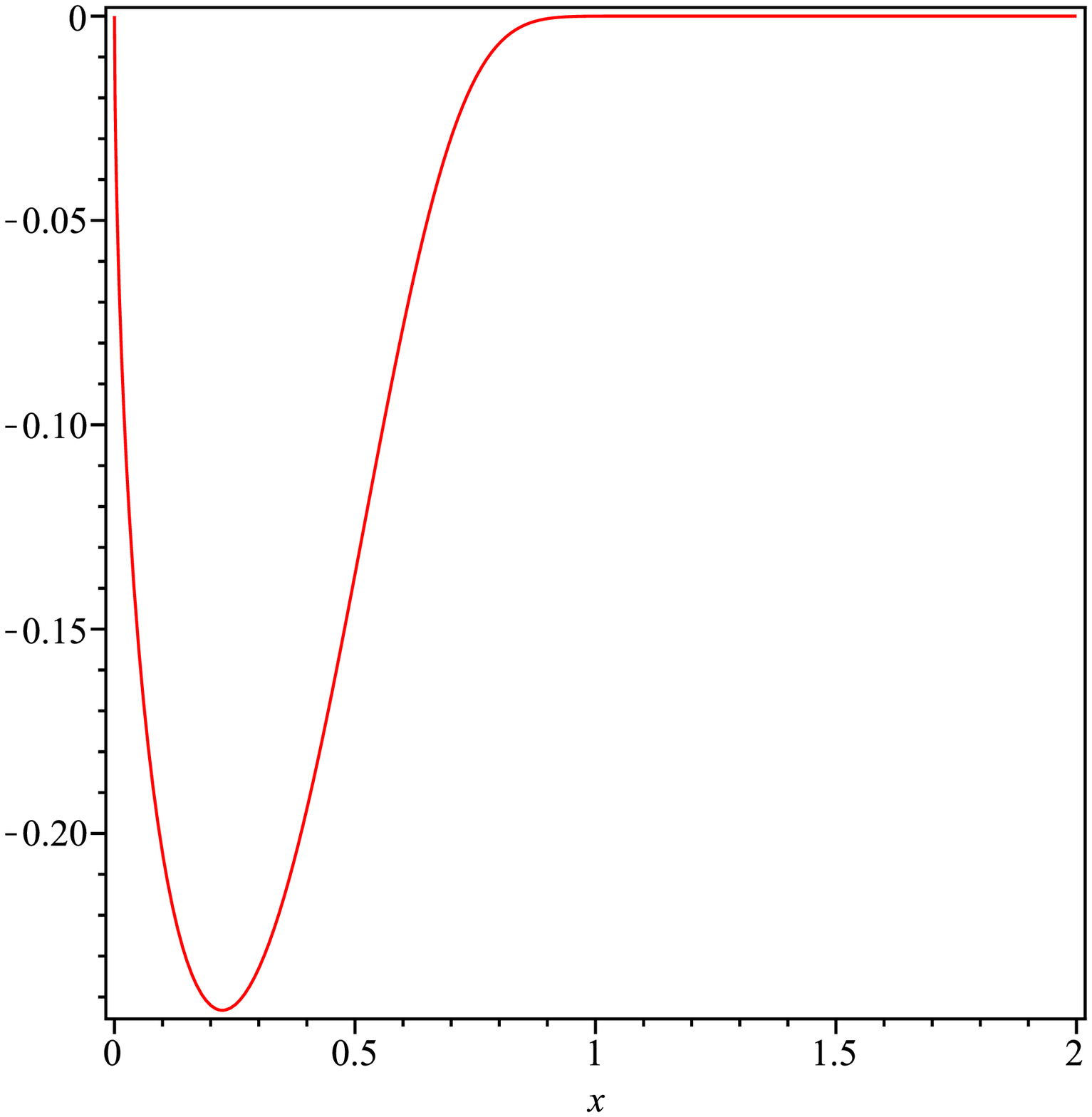} \\
			
			 \textbf{Fig 2} The figure depicts the graph of $\overline{F}_{X_{1:3}}-\overline{F}_{Y_{1:3}}$ for varying $\alpha$
		\end{tabular}
	\end{figure}

The above graph shows that the difference is negative and hence there exists an usual stochastic ordering between $X_{1:3}$ and $Y_{1:3}$. In other words theorem 4 holds true for a 3 component system. 
\end{example}

\begin{theorem}
	The random variables are same as mentioned in Theorem 4, then $X_{n:n} \leq_{lr} Y_{n:n}$ as $\displaystyle\sum_{k=1}^n \alpha_k \leq \displaystyle\sum_{k=1}^n \alpha^{*}_k$.
\end{theorem}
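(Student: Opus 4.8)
The plan is to exploit the fact that, because all components share the same baseline parameters $\beta$ and $\lambda$, the maximum order statistic of an exponentiated Chen sample is again exponentiated Chen. Indeed,
$$F_{X_{n:n}}(x)=\prod_{k=1}^n\left(1-e^{\lambda(1-e^{x^{\beta}})}\right)^{\alpha_k}=\left(1-e^{\lambda(1-e^{x^{\beta}})}\right)^{A},\qquad A:=\sum_{k=1}^n\alpha_k,$$
so $X_{n:n}\sim F(x;A,\beta,\lambda)$, and similarly $Y_{n:n}\sim F(x;A^{*},\beta,\lambda)$ with $A^{*}:=\sum_{k=1}^n\alpha_k^{*}$. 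Hence the theorem reduces to the one-dimensional statement that $F(\cdot;A,\beta,\lambda)\leq_{lr}F(\cdot;A^{*},\beta,\lambda)$ whenever $A\leq A^{*}$.

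Next I would write down the two densities. Using $f=F\tilde r$ together with the reversed hazard rate formula recorded in the introduction, the density of $F(x;a,\beta,\lambda)$ is $f(x;a,\beta,\lambda)=a\beta\lambda x^{\beta-1}e^{x^{\beta}}e^{\lambda(1-e^{x^{\beta}})}\left(1-e^{\lambda(1-e^{x^{\beta}})}\right)^{a-1}$ on $(0,\infty)$. Forming the likelihood ratio of $Y_{n:n}$ to $X_{n:n}$, every factor not involving the exponent cancels, leaving
$$\frac{f(x;A^{*},\beta,\lambda)}{f(x;A,\beta,\lambda)}=\frac{A^{*}}{A}\left(1-e^{\lambda(1-e^{x^{\beta}})}\right)^{A^{*}-A}.$$

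Then I would note that $u(x):=1-e^{\lambda(1-e^{x^{\beta}})}$ is the Chen distribution function \eqref{CD}, hence nondecreasing on $(0,\infty)$ with values in $(0,1)$ (or, directly, $u'(x)=\lambda\beta x^{\beta-1}e^{x^{\beta}}e^{\lambda(1-e^{x^{\beta}})}>0$). Since $A^{*}-A\geq 0$, the power map $t\mapsto t^{A^{*}-A}$ is nondecreasing on $(0,1)$, so $x\mapsto\left(1-e^{\lambda(1-e^{x^{\beta}})}\right)^{A^{*}-A}$ is nondecreasing, and multiplication by the positive constant $A^{*}/A$ preserves this. Thus the likelihood ratio above is nondecreasing in $x$ over $(0,\infty)$, the common support of the two maxima, which by definition (d) yields $X_{n:n}\leq_{lr}Y_{n:n}$.

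Unlike the previous theorems, I expect no real obstacle here; the argument is essentially the closed-form collapse followed by a monotonicity check. The only points worth flagging are the degenerate case $A=A^{*}$, in which the ratio is the constant $1$ and the likelihood ratio order holds trivially with $X_{n:n}\overset{d}{=}Y_{n:n}$, and the fact that majorization plays no role — the equality-of-sums condition is relaxed to the single inequality $\sum\alpha_k\leq\sum\alpha_k^{*}$, which is exactly what makes the product telescope into a single exponentiated Chen distribution.
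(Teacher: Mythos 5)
Your proposal is correct and follows essentially the same route as the paper: collapse the product of distribution functions into a single exponentiated Chen law with exponent $\sum_{k=1}^n\alpha_k$, form the likelihood ratio $\dfrac{\sum_k\alpha_k^{*}}{\sum_k\alpha_k}\left(1-e^{\lambda(1-e^{x^{\beta}})}\right)^{\sum_k\alpha_k^{*}-\sum_k\alpha_k}$, and verify it is nondecreasing (the paper differentiates explicitly, you argue by monotone composition, which is an immaterial difference).
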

\begin{proof}
The distribution function of the parallel system represented as $X_{n:n}$ is
$$F_{X_{n:n}}(x)=\left(1-e^{\lambda(1-e^{x^b})}\right)^{\displaystyle\sum_{k=1}^n\alpha_k}$$
and the corresponding probability density function is
$$f_{X_{n:n}}(x)=\displaystyle\sum_{k=1}^n\alpha_k\left(1-e^{\lambda(1-e^{x^b})}\right)^{(\displaystyle\sum_{k=1}^n\alpha_k-1)}\beta\lambda x^{\beta -1}e^{\lambda(1-e^{x^{\beta}})+x^{\beta}}.$$
It is enough to prove that the ratio $\dfrac{f_{Y_{n:n}}(x)}{f_{X_{n:n}}(x)}$ is increasing in $x$. And 
$$\dfrac{f_{Y_{n:n}}(x)}{f_{X_{n:n}}(x)}= \dfrac{\displaystyle\sum_{k=1}^n \alpha^{*}_k}{\displaystyle\sum_{k=1}^n \alpha_k} \left(1-e^{\lambda(1-e^{x^{\beta}})}\right)^{(\displaystyle\sum_{k=1}^n \alpha^{*}_k - \displaystyle\sum_{k=1}^n \alpha_k)}.$$
Differentiating with respect to $x$, we find that 
\begin{align*}
\dfrac{d}{dx}\left(\dfrac{f_{Y_{n:n}}(x)}{f_{X_{n:n}}(x)}\right)&= \beta \lambda x^{\beta-1} e^{(\lambda(1-e^{x^{\beta}})+x^{\beta})} \dfrac{\displaystyle\sum_{k=1}^n \alpha^{*}_k}{\displaystyle\sum_{k=1}^n \alpha_k}\left(\displaystyle\sum_{k=1}^n \alpha^{*}_k - \displaystyle\sum_{k=1}^n \alpha_k\right) \left(1-e^{\lambda(1-e^{x^{\beta}})}\right)^{(\displaystyle\sum_{k=1}^n \alpha^{*}_k - \displaystyle\sum_{k=1}^n \alpha_k-1)}\\
& \geq 0 \text{ for } \displaystyle\sum_{k=1}^n \alpha_k \leq \displaystyle\sum_{k=1}^n \alpha^{*}_k.
\end{align*}
Hence the result follows.
\end{proof}

\section*{Conclusion}
The results discussed in this paper includes the usual stochastic ordering between $X_{1:n}$ and $Y_{1:n}$ (two series system) when the parameter $\lambda$ is varied,
or when only the parameter $\alpha$ has been varied. Whereas for two parallel system, usual stochastic ordering exists when only the parameter $\lambda$ is varied, or only the parameter $\beta$ is varied. Also likelihood ratio ordering exists when the parameter $\alpha$ varies. 

\section*{Decleration}

Funding: The first author is thankful to Indian Institute of Technology Kharagpur for research assistance-ship.\\
Conflicts of interest/Competing interests: The authors declare that they have no conflict of interest.\\
Availability of data and material: Not Applicable\\
Code availability: Not Applicable.


\end{document}